\theoremstyle{plain}
\newtheorem{lemma}{Lemma}
\numberwithin{equation}{section}
\begin{document}
\title[A note on the complex rewriting of the elliptic first order systems in the plane]{A note on elliptic first order systems in the plane and the Vekua Equation with structure polynomial $X^2 + \beta X + \alpha$}
\begin{abstract}
This note deals with the following problem: under which conditions can elliptic first order systems in the plane be complex-rewritten as a parameter-depending Vekua-type equation? 
\end{abstract}
\author{D. Alay\'on-Solarz }
\maketitle
\section{Introduction}

We consider the Vekua equation
\begin{displaymath}
\partial_{\bar z}  w + A \cdot w + B \cdot \overline{w} = F
\end{displaymath}
where $\partial_{\bar z}$ denotes the Cauchy-Riemann operators and $A,B$ and $F$ are measurable functions. This class of functions is very general: any differentiable complex function $w$ without zeros is a solution for this equation, for this we choose $A,B$ and $F$ as:
\begin{displaymath}
A = -\frac{\partial_{\bar z} w}{w}, \ \ \ \ B = 0 \ \ \ \  F = 0.
\end{displaymath}
We recall also that every Pseudoanalytic function in the sense of Bers is a Generalized Analytic Function.
\\
The relationship between Generalized Analytic Functions and first order elliptic systems in the plane is well known: let us consider the real system
\begin{equation}
\left\{\begin{array}{cc}-v_y + a_{11} u_x+ a_{12} u_y+ a_{1} u + b_{1}v = f_1 \\ \  \ v_x + a_{21} u_x+ a_{22} u_y+ a_{2} u + b_{2}v  = f_2\end{array}\right.
\end{equation}
where 
\begin{displaymath}
a_{11}, a_{22} > 0, \ \ \ \ a_{11} a_{22} - \frac{1}{4}(a_{12} + a_{21})^2 > 0.
\end{displaymath}
In this case the system is said to be of elliptic type and every elliptic system can be reduced to the previous form. A system is said to be in the canonical form if
\begin{equation}
\left\{\begin{array}{cc}-  V_y +  U_x+ a U + b V = f\\  \\ \  \  V_x + U_y+ c U + d V  = g\end{array}\right.
\end{equation}
then the real system can be complex rewritten as
\begin{displaymath}
\partial_{\bar z}  w + A \cdot w + B \cdot \overline{w} = 0
\end{displaymath}
where $w$ depends on $u$ and $v$ and the pair $u,v$ is a solution to the real system. The advantage of the latter form is that complex methods can be used to study the corresponding solutions.
\\
\\
In general, the system is assumed uniformly elliptic and by solving an auxiliary Beltrami equation the system is taken to the canonical form, for details we refer to the second chapter of \cite{Vek}. Observe that If the quofficients of the system satisfy the conditions:

\begin{displaymath}
a_{11}= a_{22}, \ \ \ a_{21} = -a_{12}
\end{displaymath} 

then it is not necessary to solve a Beltrami equation, it suffices to apply a substitution given by
\begin{displaymath}
U = a_{11}u
\end{displaymath}
\begin{displaymath}
V = v + a_{12}V
\end{displaymath}
The resulting system is on the canonical form. 
\\
\\
We now include in this note the language of parameter-depending structure polynomial $X^2 + \beta X + \alpha$ where $\alpha$ and $\beta$ are functions depending on the variables $(x,y)$. Here $i$ satisfy 
\begin{displaymath}
i^2 = - \beta i - \alpha
\end{displaymath}
The corresponding algebra of functions is given by
\begin{displaymath}
(u_1 + i v_1) \cdot (u_2 + i v_2) =  (u_1 u_2 - \alpha  v_1 v_2) + i (u_1 v_2 + u_2 v_1 - \beta  v_1 v_2)
\end{displaymath}
The Cauchy-Riemann operator is
\begin{displaymath}
\frac{1}{2} ( \partial_x + i \partial_y)
\end{displaymath}
\
If $w = u + i v$. The equation
\begin{displaymath}
\partial_{\bar z} w = 0
\end{displaymath}
is equivalent to
\begin{displaymath}
 \partial_x u - \alpha \partial_y v = 0
\end{displaymath}
\begin{displaymath}
\partial_y u + \partial_x v - \beta \partial_y v = 0.
\end{displaymath}
Which is a generalization of the ordinary Cauchy-Riemann equations and in the most general case a system of equations with variable coefficients. We will consider the Vekua equation
\begin{displaymath}
\partial_{\overline{z}} W + A \cdot W + B \cdot \overline{W} = F.
\end{displaymath}
but this time interpreted with the structure polynomial $X^2 + \beta X + \alpha$. Naturally, if $\alpha = 1$ and $\beta = 0$ one recovers the ordinary Vekua equation.
\section{The complex rewriting}
With the inclusion of the language of the structure polynomial $X^2+ \beta X + \alpha$ it turns out that \textit{every} first order elliptic system in the plane can be rewritten as Vekua equation by means of a substitution without assuming uniform ellipticity or solving an auxiliary Beltrami equation.
\begin{lemma}
Suppose the main coefficients in the  elliptic first order system (1.1) are differentiable, then the system can be complex rewritten, by means of a substitution, as a Generalized Analytic Function with structure polynomial $X^2 + \beta X + \alpha$, where
\begin{displaymath}
\alpha = \dfrac{a_{22}}{a_{11}} > 0, \ \ \ \  \beta = -\dfrac{a_{21}+a_{12}}{a_{11}}, \ \ a_{11} > 0
\end{displaymath}
\end{lemma}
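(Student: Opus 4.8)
The plan is to realize the generalized Cauchy--Riemann operator attached to $X^2+\beta X+\alpha$ as an invertible linear recombination of the two equations of (1.1) and then to absorb everything that is left into the Vekua coefficients. First I would record the explicit shape of the operator: for $W=U+iV$ with $i^2=-\beta i-\alpha$ one computes
\begin{displaymath}
2\,\partial_{\bar z}W=(U_x-\alpha V_y)+i\,(U_y+V_x-\beta V_y),
\end{displaymath}
so the homogeneous equation splits into the real pair already displayed in the introduction. The task is thus to find a substitution $(U,V)=T(u,v)$, with $T$ a matrix of functions of $(x,y)$, such that the two components $U_x-\alpha V_y$ and $U_y+V_x-\beta V_y$ reproduce, up to lower order and up to an invertible recombination $C$ of the two equations, the principal parts $a_{11}u_x+a_{12}u_y-v_y$ and $a_{21}u_x+a_{22}u_y+v_x$ of (1.1).

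Next I would make the explicit choice $U=a_{22}\,u$, $V=v-a_{12}\,u$, i.e.\ $W=a_{22}u+i\,(v-a_{12}u)$, and verify the principal-symbol identity. Writing the symbol of (1.1) as $M(\xi,\eta)=\bigl(\begin{smallmatrix}a_{11}\xi+a_{12}\eta & -\eta\\ a_{21}\xi+a_{22}\eta & \xi\end{smallmatrix}\bigr)$ and that of the generalized operator as $N(\xi,\eta)=\bigl(\begin{smallmatrix}\xi & -\alpha\eta\\ \eta & \xi-\beta\eta\end{smallmatrix}\bigr)$, the computation to carry out is the identity $N(\xi,\eta)\,T=C\,M(\xi,\eta)$ in $(\xi,\eta)$ with $C=\bigl(\begin{smallmatrix}\alpha & 0\\ \beta & 1\end{smallmatrix}\bigr)$. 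Matching coefficients of $\xi$ and $\eta$ entrywise is where the values $\alpha=a_{22}/a_{11}$ and $\beta=-(a_{12}+a_{21})/a_{11}$ are forced; equivalently one checks at the level of top-order terms that $U_x-\alpha V_y=\alpha(a_{11}u_x+a_{12}u_y-v_y)$ and $U_y+V_x-\beta V_y=\beta(a_{11}u_x+a_{12}u_y-v_y)+(a_{21}u_x+a_{22}u_y+v_x)$. I expect this matching to be the heart of the argument. Here the ellipticity hypothesis enters through the discriminant
\begin{displaymath}
\beta^2-4\alpha=\frac{(a_{12}+a_{21})^2-4\,a_{11}a_{22}}{a_{11}^2}<0,
\end{displaymath}
which certifies that $X^2+\beta X+\alpha$ defines a genuine complex-type algebra (pointwise isomorphic to $\mathbb{C}$, hence a field), while $a_{11}>0$ makes $\alpha$ well defined and positive.

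Finally I would collect the remainder. Differentiating the variable coefficients of the substitution produces terms such as $(\partial_x a_{22})u$ and $(\partial_y a_{12})u$ — this is precisely where the assumed differentiability of the main coefficients is used — and to these I add the zeroth-order contributions $a_1u+b_1v$ and $a_2u+b_2v$ from (1.1). All of these are $(x,y)$-dependent linear combinations of $u$ and $v$ carrying no derivatives, so it remains to re-express $u$ and $v$ through $W$ and $\overline{W}$. Since $\det T=a_{22}>0$ the substitution is invertible, and since the algebra is a field the conjugation $\bar i=-\beta-i$ is available and $2i+\beta$ is invertible, so $V=(2i+\beta)^{-1}(W-\overline{W})$ and $U=W-iV$ recover $u,v$ as algebra-linear combinations of $W$ and $\overline{W}$. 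This writes the remainder in the form $A\cdot W+B\cdot\overline{W}$ with measurable $A,B$; setting $F=\tfrac12\bigl(\alpha f_1+i(\beta f_1+f_2)\bigr)$ then yields $\partial_{\bar z}W+A\cdot W+B\cdot\overline{W}=F$, which for $f_1=f_2=0$ exhibits $W$ as a generalized analytic function with the asserted structure polynomial. The one point demanding care beyond the symbol matching is checking that this last step genuinely lands in the $W,\overline{W}$ span rather than requiring two independent real coefficients, which is exactly what $\beta^2-4\alpha\neq0$ guarantees.
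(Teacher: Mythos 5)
Your proposal is correct and takes essentially the same route as the paper: the same substitution $U=a_{22}u$, $V=v-a_{12}u$, followed by the same invertible recombination of the two equations (your matrix $C$ with rows $(\alpha,0)$ and $(\beta,1)$ is exactly the paper's ``multiply the first line by $\alpha$, resp.\ by $\beta$ and add to the second''), which forces the same $\alpha$ and $\beta$ and leaves only zeroth-order terms to absorb into $A$ and $B$. The sole divergence is notational: you use the algebra conjugation $\bar i=-\beta-i$ where the paper simply sets $\overline{W}=U-iV$, so your explicit formulas for $A$ and $B$ differ from the paper's, but the argument is the same.
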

\begin{proof}
We start by subjecting the system to the substitution given by
\begin{displaymath}
U = a_{22} u, \ \ \ \  V =  v - a_{12}u, \ \ \ \ a_{22}>0,
\end{displaymath}
and we get 
\begin{equation}
\left\{\begin{array}{cc}-V_y + \dfrac{a_{11}}{a_{22}} U_x+ a_{\ast} U + b_{\ast}V = f_1\\  \\ \  \  V_x +\dfrac{( a_{21} + a_{12})}{a_{22}} U_x+ U_y+ c_{\ast} U + d_{\ast}V  = f_2\end{array}\right.
\end{equation}
Now setting
\begin{displaymath}
\alpha = \dfrac{a_{22}}{a_{11}} > 0, \ \ \ \  \beta = -\dfrac{a_{21}+a_{12}}{a_{11}}, \ \ a_{11} > 0
\end{displaymath}
the system is rewritten as
\begin{equation}
\left\{\begin{array}{cc}-V_y + \dfrac{1}{\alpha} U_x+ a_{\ast} U + b_{\ast}V = f_1\\  \\ \  \  V_x -\dfrac{\beta}{\alpha} U_x+ U_y+ c_{\ast} U + d_{\ast}V  = f_2\end{array}\right.
\end{equation}
Now we multiply the first line by $\beta$ and we sum it to the second line and we get
\begin{equation}
\left\{\begin{array}{cc}-V_y + \dfrac{1}{\alpha} U_x+ a_{\ast} U + b_{\ast}V = f_1\\  \\ \  \  V_x -\beta V_y+ U_y+ c U + d V  = g\end{array}\right.
\end{equation}
Finally, we multiply the first line by $\alpha > 0$ to obtain
\begin{equation}
\left\{\begin{array}{cc}- \alpha V_y +  U_x+ a U + b V = f\\  \\ \  \  V_x -\beta V_y+ U_y+ c U + d V  = g\end{array}\right.
\end{equation}
Consider the complex algebra with differentiable structure polynomial $X^2 + \beta X + \alpha$, where $\alpha$ and $\beta$ are the same that were obtained from the elliptic system. Now, following Vekua we introduce the complex map
\begin{displaymath}
W = U + i V,
\end{displaymath}
and
\begin{displaymath}
\overline{W} = U - i V,
\end{displaymath}
and so we may write any elliptic system with the said conditions in the form
\begin{displaymath}
\partial_{\overline{z}} W + A \cdot W + B \cdot \overline{W} = F.
\end{displaymath}
Where
\begin{displaymath}
\partial_{\overline{z}}W = \frac{1}{2}(W_x + i W_y),
\end{displaymath}
\begin{displaymath}
A = \frac{1}{4} \Big( a - \frac{\beta}{\alpha} b + d + i  c - i \frac{1}{\alpha} b  \Big) 
\end{displaymath}
\begin{displaymath}
 B = \frac{1}{4}(a + \frac{\beta}{\alpha} b - d + i c +  i \frac{1}{\alpha} b \Big)
 \end{displaymath}
 \begin{displaymath}
 F = f + i g
 \end{displaymath}
\end{proof}
\section{Concluding remarks on the elliptic case}
Recall that a system (1.1)  is said to be elliptic if
\begin{displaymath}
a_{11} > 0, \ \ a_{22} > 0, 
\end{displaymath}
\begin{displaymath}
\Delta = a_{11} a_{22} - \frac{1}{4}( a_{12} + a_{21})^2 > 0
\end{displaymath}
dividing the last equation by $a^2_{11} > 0$ we get
\begin{displaymath}
\frac{a_{22}}{ a_{11}} - \frac{1}{4}(\frac{ a_{12} + a_{21}}{a_{11}})^2 > 0
\end{displaymath}
which is equivalent to
\begin{displaymath}
4 \alpha - \beta^2 >0
\end{displaymath}
When $\alpha$ and $\beta$ are constants the underlying algebra is the elliptic complex numbers. While all elliptic systems with constant $a_{ij}$, $i,j = 1,2$ coefficients are of this form there is a family of systems with variable coefficients which result in $\alpha$ and $\beta$ being constants.

\section{Acknowledgments}
I am grateful to W. Tutschke and C.J. Vanegas for some discussions.

\end{document}